\documentclass[12pt,reqno]{article}

\usepackage[usenames]{color}
\usepackage{amssymb}
\usepackage{amsmath}
\usepackage{amsthm}
\usepackage{amsfonts}
\usepackage{amscd}
\usepackage{graphicx}
\usepackage{tikz}

\usepackage[colorlinks=true,
linkcolor=webgreen,
filecolor=webbrown,
citecolor=webgreen]{hyperref}

\definecolor{webgreen}{rgb}{0,.5,0}
\definecolor{webbrown}{rgb}{.6,0,0}

\usepackage{color}
\usepackage{fullpage}
\usepackage{float}

\usepackage{graphics}
\usepackage{latexsym}
\usepackage{epsf}

\setlength{\textwidth}{6.5in}
\setlength{\oddsidemargin}{.1in}
\setlength{\evensidemargin}{.1in}
\setlength{\topmargin}{-.1in}
\setlength{\textheight}{8.4in}

\newcommand{\seqnum}[1]{\href{https://oeis.org/#1}{\rm \underline{#1}}}

\newcommand{\median}{\textnormal{median}}

\begin{document}


\theoremstyle{plain}
\newtheorem{theorem}{Theorem}
\newtheorem{corollary}[theorem]{Corollary}
\newtheorem{lemma}[theorem]{Lemma}
\newtheorem{proposition}[theorem]{Proposition}

\theoremstyle{definition}
\newtheorem{definition}[theorem]{Definition}
\newtheorem{example}[theorem]{Example}
\newtheorem{conjecture}[theorem]{Conjecture}

\theoremstyle{remark}
\newtheorem{remark}[theorem]{Remark}

\begin{center}
\vskip 1cm{\LARGE\bf Proofs of some Conjectures from the OEIS
}
\vskip 1cm
\large
Sela Fried \\
Department of Computer Science \\
Israel Academic College in Ramat Gan\\
Pinhas Rutenberg 87, Ramat Gan \\
52275 Israel \\
\href{mailto:friedsela@gmail.com}{\tt friedsela@gmail.com} \\
\end{center}

\vskip .2 in

\begin{abstract}
In this work we resolve several conjectures stated in the On-Line Encyclopedia of Integer sequences.
\end{abstract}
	
\section{Introduction}
The On-Line Encyclopedia of Integer sequences (OEIS) \cite{SL} contains over $375,000$ sequences and many thousands of conjectures about them. The purpose of this work is to resolve some of these conjectures. The methods we apply are mainly elementary, except, perhaps, our use of generating functions and the Euler-McLaurin formula.

\section{Main results}

\subsection{The number of bits required to represent \texorpdfstring{$\binom{2^n}{2^{n-1}}$}{}}

The statement of the following theorem was conjectured in \seqnum{A112884}. For its proof and also for proof of the conjecture in the next section, we shall need a useful tool from numerical analysis, namely the Euler-McLaurin formula. The reader may find the formula in its most general form, for example, in \cite[298 on p.\ 524]{K}. Nevertheless, for our needs, the most simple form of the formula (e.g., \cite[296 on p.\ 521]{K}) suffices: Let $\ell<m$ be two real numbers and let $f$ be a differentiable function on the interval $[\ell,m]$. Then \[
\sum_{k=\ell}^m f(k) =   \int_{\ell}^{m} f(x)dx+\frac{f(\ell)+f(m)}{2}+ \int_{\ell}^{m}f'(x)P_{1}(x)dx,\] where $P_1(x) = x - \lfloor x\rfloor -1/2$. Suppose that $f'$ is positive and monotonically decreasing in $[\ell, m]$. By the second mean-value theorem (e.g., \cite[Corollary 4.2 on p.\ 138]{W}), for some $c\in[\ell, m]$ we have \[\int_\ell^mf'(x)P_{1}(x)dx=f'(\ell)\int_{\ell}^cP_{1}(x)dx.\] Since $P_1$ is periodic with period $1$, mean-value zero, and $|P_1(x)\leq 1/2|$ for every $x$ (see \cite{BW}), we have
\[-\frac{1}{8}\leq \int_{\ell}^cP_{1}(x)\leq 0.\]
Thus,
\begin{align}
\int_{\ell}^{m} f(x)dx+\frac{f(\ell)+f(m)}{2} - \frac{f'(\ell)}{8}&\leq \sum_{k=\ell}^m f(k)\nonumber\\
&\leq \int_{\ell}^{m} f(x)dx+\frac{f(\ell)+f(m)}{2}. \label{012}
\end{align}

\begin{theorem}
Let $n\geq 1$ be an integer. The number of binary bits required to represent $\binom{2^n}{2^{n-1}}$ is $2^n -\lfloor n/2\rfloor$.
\end{theorem}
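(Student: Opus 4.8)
The plan is to reduce the claim to a sufficiently precise estimate of $\log_2\binom{2^n}{2^{n-1}}$. Recall that the number of binary bits needed to represent a positive integer $N$ is $\lfloor\log_2 N\rfloor+1$, so the theorem is equivalent to
\[
\left\lfloor\log_2\binom{2^n}{2^{n-1}}\right\rfloor=2^n-\lfloor n/2\rfloor-1 .
\]
I would write $\log_2\binom{2^n}{2^{n-1}}=\frac{1}{\ln 2}\bigl(\ln\bigl((2^n)!\bigr)-2\ln\bigl((2^{n-1})!\bigr)\bigr)$ and estimate each $\ln(k!)=\sum_{j=1}^k\ln j$ by the Euler--Maclaurin formula applied to $f(x)=\ln x$, whose derivative $f'(x)=1/x$ is positive and monotonically decreasing, exactly as required by the discussion preceding the theorem.

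Using the exact form of the formula, I obtain $\sum_{j=1}^k\ln j=k\ln k-k+1+\tfrac12\ln k+R(k)$, where $R(k)=\int_1^k P_1(x)/x\,dx$. Substituting $k=2^n$ and $k=2^{n-1}$ and cancelling, the main terms collapse to $2^n\ln 2-1+\tfrac{2-n}{2}\ln 2$, so that after division by $\ln 2$ the central value of $\log_2\binom{2^n}{2^{n-1}}$ is $2^n-\tfrac n2+1-\log_2 e$, with $\log_2 e-1\approx 0.443$.

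The heart of the argument is to control the remainder $E=R(2^n)-2R(2^{n-1})=\int_{2^{n-1}}^{2^n}\frac{P_1(x)}{x}\,dx-\int_1^{2^{n-1}}\frac{P_1(x)}{x}\,dx$. Bounding the three factorials separately via \eqref{012} turns out to be too lossy in the odd case, so instead I would apply the second mean-value theorem to each of the two surviving integrals: the first contributes a quantity in $[-\tfrac{1}{2^{n+2}},0]$ thanks to the factor $1/2^{n-1}$, while the second lies in $[-\tfrac18,0]$. Hence $E\in[-\tfrac{1}{2^{n+2}},\tfrac18]$, and dividing by $\ln 2$ confines $\log_2\binom{2^n}{2^{n-1}}$ to an interval of length less than $\tfrac12$ about $2^n-\tfrac n2-(\log_2 e-1)$.

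It then remains to take floors by parity. When $n$ is even, $n/2$ is an integer and the interval lies strictly inside $\bigl(2^n-\tfrac n2-1,\,2^n-\tfrac n2\bigr)$, giving the desired floor $2^n-\tfrac n2-1$. When $n$ is odd, writing $n/2=\lfloor n/2\rfloor+\tfrac12$ shifts the interval into $\bigl(2^n-\lfloor n/2\rfloor-1,\,2^n-\lfloor n/2\rfloor\bigr)$ precisely when $\tfrac{\log_2 e}{2^{n+2}}<\tfrac32-\log_2 e$, which holds for $n\geq 3$; the single remaining case $n=1$, where $\binom{2}{1}=2$ indeed needs $2=2^1-\lfloor 1/2\rfloor$ bits, I would check by hand. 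I expect the error analysis of the third paragraph to be the main obstacle, since the naive term-by-term bound fails for odd $n$ and one must exploit both the cancellation in $E$ and the smallness of $1/2^{n-1}$.
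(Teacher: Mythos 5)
Your proof is correct, and it runs on the same engine as the paper's: Euler--Maclaurin for the logarithm, the second mean-value theorem bound $\int_\ell^c P_1(x)\,dx\in[-1/8,0]$, and, crucially, the same asymmetry of errors --- the contribution from the range $[2^{n-1},2^n]$ carries the factor $1/2^{n-1}$ and is negligible, while the contribution from $[1,2^{n-1}]$ is only bounded by $1/8$. You correctly identify that without this refinement the odd case fails; the paper exploits exactly the same asymmetry (its lower-sum error term is $\frac{1}{8\ln 2}$, its upper-sum error term is $\frac{1}{8(2^{n-1}+1)\ln 2}$). Where you genuinely differ is the decomposition. The paper writes $\log_2\binom{2^n}{2^{n-1}}=\sum_{k=2^{n-1}+1}^{2^n}\log_2 k-\sum_{k=1}^{2^{n-1}}\log_2 k$ and applies the inequality \eqref{012} to each sum; since the first sum starts at $2^{n-1}+1$ rather than at a power of $2$, terms like $(2^{n-1}+1)\log_2(2^{n-1}+1)$ appear and must be tamed with $\frac{x}{1+x}\le\ln(1+x)\le x$, after which the paper settles for the numerical window $\bigl[2^n-\tfrac n2-\tfrac{499}{1000},\,2^n-\tfrac n2-\tfrac1{50}\bigr]$. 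Your factorial decomposition $\ln((2^n)!)-2\ln((2^{n-1})!)$ keeps every endpoint at an exact power of $2$, so the main terms collapse to the closed form $2^n-\tfrac n2+1-\log_2 e$ with no auxiliary logarithm estimates, and all slack sits in the remainder $E\in[-2^{-(n+2)},\tfrac18]$. What this buys is a cleaner, tighter computation with an explicit constant $\log_2 e-1\approx 0.443$ in place of the paper's ad hoc window; what it costs is only the explicit hand-check at $n=1$, where your threshold $n\ge 3$ for the odd case does not apply --- a base case that the paper's own bounds (stated for $n\ge 2$) also leave to inspection, though the paper does not say so.
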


\begin{proof}
The number of binary bits required to represent a nonnegative integer $m$ is $\lfloor \log_2 m\rfloor + 1$. We have 
\begin{equation}\label{p1}
\log_2 \binom{2^{n}}{2^{n-1}}=\sum_{k=2^{n-1}+1}^{2^{n}}\log_{2}k-\sum_{k=1}^{2^{n-1}}\log_{2}k.
\end{equation}
Applying \eqref{012} on the function $f(x)=\log_2x$, we obtain 
\begin{align}
\left[x\log_{2}x-\frac{x}{\ln2}\right]_{\ell}^{m}+\frac{\log_{2}\ell+\log_{2}m}{2}-\frac{1}{8\ell\ln2}&\leq\sum_{k=\ell}^{m}\log_{2}k\nonumber\\&\leq\left[x\log_{2}x-\frac{x}{\ln2}\right]_{\ell}^{m}+\frac{\log_{2}\ell+\log_{2}m}{2}. \nonumber   
\end{align} Applying these inequalities on \eqref{p1}, we obtain 
\begin{align}
&2^{n}-\frac{n}{2}+1-2^{n-1}\left(1+\frac{1}{2\cdot 2^{n-1}}\right)\log_{2}\left(1+\frac{1}{2^{n-1}}\right)-\frac{1}{8(2^{n-1}+1)\ln2}\nonumber\\
&\leq\log_{2}\binom{2^{n}}{2^{n-1}}\nonumber\\
&\leq 2^{n}-\frac{n}{2}+1-2^{n-1}\left(1+\frac{1}{2\cdot 2^{n-1}}\right)\log_{2}\left(1+\frac{1}{2^{n-1}}\right)+\frac{1}{8\ln2}.    \nonumber
\end{align} 
Using the well known inequalities \[\frac{x}{1+x}\leq \ln(1+x)\leq x,\] which hold for every $x>-1$, we have
\[
\frac{1}{\ln2}\left(1+\frac{1}{2^{n}}\right)\left(1-\frac{1}{2^{n-1}+1}\right)
\leq 2^{n-1}\left(1+\frac{1}{2\cdot2^{n-1}}\right)\log_{2}\left(1+\frac{1}{2^{n-1}}\right)
\leq\frac{1}{\ln2}\left(1+\frac{1}{2^{n}}\right). \]
Hence, for $n\geq 5$ (actually, for $n\geq 2$), we have
\[
2^{n}-\frac{n}{2}-\frac{499}{1000}\leq\log_{2}\binom{2^{n}}{2^{n-1}} \leq 2^{n}-\frac{n}{2} - \frac{1}{50}.\]
Thus, \[\left\lfloor \log_{2}\binom{2^{n}}{2^{n-1}}\right\rfloor +1 = 2^n -\left\lfloor \frac{n}{2}\right\rfloor. \qedhere
\] 
\end{proof}

\subsection{Floor of sum of the first \texorpdfstring{$10^n$}{} cube roots}

The statement of the following theorem was conjectured in \seqnum{A136269}.

\begin{theorem}
Let $n\geq 1$ be an integer, which is divisible by $3$. Then \[ \left\lfloor\sum_{i=1}^{10^n}\sqrt[3]{i}\right\rfloor=\frac{3\cdot10^{4n/3}}{4}+5\cdot10^{n/3-1}-1.\]
\end{theorem}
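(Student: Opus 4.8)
The plan is to apply the Euler-Maclaurin inequality \eqref{012} to the function $f(x)=x^{1/3}=\sqrt[3]{x}$ on the interval $[\ell,m]=[1,10^n]$. Its derivative $f'(x)=\tfrac{1}{3}x^{-2/3}$ is positive and monotonically decreasing there, so the hypotheses are met. First I would record the three ingredients that \eqref{012} requires: the integral
\[
\int_1^{10^n}x^{1/3}\,dx=\frac{3}{4}\left(10^{4n/3}-1\right),
\]
the boundary term $\tfrac{1}{2}(f(1)+f(10^n))=\tfrac{1}{2}(1+10^{n/3})$, and the derivative term $\tfrac{1}{8}f'(1)=\tfrac{1}{24}$. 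Here it is essential that $n$ is divisible by $3$, so that both $10^{n/3}$ and $10^{4n/3}$ are integers and the expressions above are exact.

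Substituting these into \eqref{012} and writing $S=\sum_{i=1}^{10^n}\sqrt[3]{i}$ together with $A=\tfrac{3}{4}\cdot 10^{4n/3}+\tfrac{1}{2}\cdot 10^{n/3}-1$, the upper bound collapses to $A+\tfrac{3}{4}$ and the lower bound to $A+\tfrac{3}{4}-\tfrac{1}{24}$, giving
\[
A+\frac{17}{24}\leq S\leq A+\frac{3}{4}.
\]
Next I would observe that the quantity in the statement is exactly $A$: since $5\cdot 10^{n/3-1}=\tfrac{1}{2}\cdot 10^{n/3}$, the right-hand side of the theorem equals $\tfrac{3}{4}\cdot 10^{4n/3}+\tfrac{1}{2}\cdot 10^{n/3}-1=A$.

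It then remains to verify that $A$ is an integer. Because $n\geq 3$ is divisible by $3$, we have $4n/3\geq 4$, so $10^{4n/3}$ is divisible by $4$ and $\tfrac{3}{4}\cdot 10^{4n/3}$ is an integer; likewise $n/3\geq 1$ forces $10^{n/3}$ to be even, so $\tfrac{1}{2}\cdot 10^{n/3}$ is an integer. Hence $A$ is an integer, and the displayed bounds place $S$ strictly inside the open interval $(A,A+1)$, whence $\lfloor S\rfloor=A$, as claimed.

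The computation is otherwise routine; the one point on which the whole argument hinges is that the Euler-Maclaurin window has width only $\tfrac{3}{4}-\tfrac{17}{24}=\tfrac{1}{24}$ and, crucially, falls entirely within a single unit interval rather than straddling the integer $A+1$. Fortunately the bounds land in the range between $\tfrac{17}{24}$ and $\tfrac{3}{4}$ above $A$, comfortably below $A+1$, so the crude form \eqref{012} of the formula is already sharp enough and no finer estimate is needed.
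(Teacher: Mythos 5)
Your proof is correct and follows essentially the same route as the paper's: both apply the Euler--Maclaurin bound \eqref{012} to $f(x)=\sqrt[3]{x}$ on $[1,10^n]$, compute the same integral and boundary terms, and conclude by noting that the resulting window $[A+\tfrac{17}{24},A+\tfrac34]$ lies strictly inside a unit interval above the integer $A$. Your write-up is in fact slightly more careful than the paper's, which only remarks that $10^{4n/3}$ is divisible by $4$ and leaves the integrality of $5\cdot 10^{n/3-1}$ and the placement of the bounds implicit.
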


\begin{proof}
Let $m$ be a positive integer. Applying \eqref{012} on the function $f(x)=\sqrt[3]{x}$, we obtain 

\[\frac{3(\sqrt[3]{m^4}-1)}{4}+\frac{\sqrt[3]{m}+1}{2}-\frac{1}{24}\leq \sum_{i=1}^m\sqrt[3]{i}\leq \frac{3(\sqrt[3]{m^4}-1)}{4}+\frac{\sqrt[3]{m}+1}{2}.\]
Now, taking $m=10^n$, we have
\[
\frac{3(\sqrt[3]{10^{4n}}-1)}{4}+\frac{\sqrt[3]{10^n}+1}{2}=\frac{3(10^{4n/3}-1)}{4}+\frac{10^{n/3}+1}{2}    =\frac{3\cdot10^{4n/3}}{4}+5\cdot10^{n/3-1}-\frac{1}{4}.\]
Since $10^{4n/3}$ is divisible by $4$, we conclude that \[\left\lfloor\sum_{i=1}^{10^n}\sqrt[3]{i}\right\rfloor=\frac{3\cdot10^{4n/3}}{4}+5\cdot10^{n/3-1}-1. \qedhere\] 
\end{proof}

\subsection{Median absolute deviation of \texorpdfstring{$\{2k^2\;:\; k=1,\ldots, n\}$}{}}

The statement of the following theorem was conjectured in
\seqnum{A345318}. The sequence deals with a statistical measure of spread called the \emph{median absolute deviation} (e.g., \cite[p.~291]{H}). 
Notice that the purpose of the factor $2$ in $2k^2$ is merely to ensure that the resulting sequence is an integer sequence. We omit this factor in our analysis.

\begin{theorem}
Let $n\geq 1$ be an integer and define $A_n = \{k^2 \;:\;k=1,2,\ldots,n\}$. Let $a_n$ denote the median absolute deviation of $A_n$, i.e., \[a_n = \median(\{|x - \median(A_n)| \;:\; x\in A_n\}).\]  Then $\lim_{n\to\infty}a_n/n^2 = \sqrt{3}/8$. 
\end{theorem}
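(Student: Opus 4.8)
The plan is to first determine $\median(A_n)$ exactly and then reduce the computation of $a_n$ to a counting problem. Since the elements $1^2 < 2^2 < \cdots < n^2$ are already sorted, the median is $M_n := \median(A_n) = \left(\frac{n+1}{2}\right)^2$ when $n$ is odd and $M_n = \frac12\left(\left(\frac n2\right)^2 + \left(\frac n2+1\right)^2\right)$ when $n$ is even; in both cases $M_n = n^2/4 + O(n)$. The deviations are $d_k := |k^2 - M_n|$ for $k=1,\ldots,n$, and since $k\mapsto k^2$ is increasing, the map $k\mapsto d_k$ first decreases and then increases as $k^2$ passes $M_n$. Consequently, for each threshold $D\ge 0$ the set $\{k : d_k\le D\}$ is the block of integers lying in $[\sqrt{M_n-D},\sqrt{M_n+D}]$ (valid as long as $D\le M_n$), so its cardinality is
\[ C(D) := \#\{k : d_k\le D\} = \sqrt{M_n+D} - \sqrt{M_n-D} + O(1), \]
the $O(1)$ accounting for the floor and ceiling at the two endpoints.

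Next I would observe that $a_n$ is, up to the usual even/odd convention which will not affect the limit, the $\lceil n/2\rceil$-th smallest deviation, i.e., the value of $D$ at which the increasing step function $C(D)$ crosses $n/2$. I would therefore solve the continuous equation $\sqrt{M_n+D} - \sqrt{M_n-D} = n/2$. Writing $u=\sqrt{M_n+D}$, $v=\sqrt{M_n-D}$ turns this into $u-v=n/2$ and $u^2+v^2=2M_n$, which yields the quadratic $2v^2 + nv + (n^2/4 - 2M_n)=0$ and hence $v=\frac14\left(-n+\sqrt{16M_n - n^2}\right)$. Substituting $M_n = n^2/4 + O(n)$ gives $16M_n - n^2 = 3n^2 + O(n)$, so $v = \frac{(\sqrt3-1)n}{4} + O(1)$ and
\[ D = M_n - v^2 = \frac{n^2}{4} - \frac{(\sqrt3-1)^2}{16}n^2 + O(n) = \frac{\sqrt3}{8}n^2 + O(n), \]
using $(\sqrt3-1)^2 = 4-2\sqrt3$. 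This already pins down the candidate value $\sqrt3/8$.

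The remaining, and main, technical point is to convert this continuous computation into a rigorous statement about the discrete median. I would set $D^* = \sqrt3\,n^2/8$ and show that for every fixed $\varepsilon>0$ one has $C(D^* - \varepsilon n^2) < n/2 < C(D^* + \varepsilon n^2)$ for all large $n$, which forces $a_n = D^* + o(n^2)$. This follows because the smooth part $g(D) = \sqrt{M_n+D} - \sqrt{M_n-D}$ has derivative $g'(D) = \frac12(M_n+D)^{-1/2} + \frac12(M_n-D)^{-1/2}$ of order $1/n$ near $D^*$ — here one checks that $M_n\pm D^*$ are both of order $n^2$, i.e., that $D^*<M_n$, which holds since $\sqrt3/8 < 1/4$ — so a change of $\varepsilon n^2$ in $D$ changes $g$ by order $\varepsilon n$, dwarfing the $O(1)$ discretization error in $C$. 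One should also record that the relevant $k$ stay strictly inside $\{1,\ldots,n\}$: indeed $\sqrt{M_n+D^*}\approx n\sqrt{(2+\sqrt3)/8}\approx 0.68\,n$ and $\sqrt{M_n-D^*}\approx 0.18\,n$, so the endpoint $k=n$ and the constraint $k\ge 1$ never interfere with the count. Dividing by $n^2$ and letting $n\to\infty$ then gives $a_n/n^2\to\sqrt3/8$. The only genuine obstacle is this error-control step; the algebra leading to $\sqrt3/8$ is routine once the counting reformulation is in place.
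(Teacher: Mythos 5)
Your proposal is correct, but it follows a genuinely different route from the paper. The paper works with the exact median $(n+1)^2/4$ (odd $n$ only), rewrites each deviation as a sum of consecutive odd numbers so that the deviations split into two explicitly quadratic, monotone lists $L_n$ and $R_n$ (those coming from $k\leq (n+1)/2$ and from $k>(n+1)/2$), and then locates the median by an index-matching argument: it posits that the median sits in one of the two lists, imposes that exactly $(n-1)/2$ deviations lie below it, and solves the resulting pair of quadratic inequalities in the integer index to get the explicit sandwich $\frac{(n-1)\sqrt{3n^{2}+10n+3}}{8}\leq a_n\leq \frac{(n+1)^{2}\sqrt{3}}{8}$, from which the limit follows. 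You instead invert the deviation map directly: you count indices via $C(D)=\#\{k: |k^2-M_n|\leq D\}=\sqrt{M_n+D}-\sqrt{M_n-D}+O(1)$, identify $a_n$ as the crossing point of this empirical distribution function with $n/2$, solve the continuous equation (the same quadratic, in disguise), and absorb the $O(1)$ discretization error by noting that the smooth part has derivative of order $1/n$ near $D^*=\sqrt{3}n^2/8$, so an $\varepsilon n^2$ perturbation of the threshold moves the count by order $\varepsilon n$. Your error-control step is sound: you correctly check $D^*<M_n$ (so both square roots are of order $n$ and $g'\asymp 1/n$ there) and that the relevant indices stay inside $\{1,\dots,n\}$. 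What each approach buys: the paper's argument yields explicit, nonasymptotic two-sided bounds valid for every odd $n$, at the cost of a two-case analysis and of omitting even $n$; your argument handles both parities uniformly, avoids the case split and the sum-of-odd-numbers device entirely, and generalizes immediately to deviations of any increasing sequence with a smooth inverse, but it only delivers the asymptotics (which is all the theorem asks for).
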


\begin{proof}
Assume that $n$ is odd (the case of even $n$ is similar and we omit the details). Then, $\median(A_n) = (n+1)^2/4$. We shall make use of the well-known fact that every square is the sum of odd numbers. More precisely, for every integer $m\geq 1$, we have 
\[\sum_{k = 0}^{m-1}(2k-1) = m^2.\]  Let  $B_n =\{|x - \median(A_n)| \;:\; x\in A_n\}$. Then
\begin{align}
B_n& = \left\{ \left|\frac{(n+1)^{2}}{4}\right|-k^{2}\;:\;k=1,2,\ldots,n\right\} \nonumber    \\
&=\left\{ \frac{(n+1)^{2}}{4}-k^{2}:\;k=1,\ldots,\frac{n+1}{2}\right\} \bigcup\left\{ k^{2}-\frac{(n+1)^{2}}{4}:\;k=\frac{n+1}{2}+1,\ldots,n\right\} \nonumber\\
&=\left\{ \sum_{j=k}^{\frac{n+1}{2}-1}(2j+1):\;k=1,\ldots,\frac{n+1}{2}\right\} \bigcup\left\{ \sum_{j=\frac{n+1}{2}}^{k-1}(2j+1):\;k=\frac{n+1}{2}+1,\ldots,n\right\} \nonumber\\
&=\left\{ (k-1)n-(k-2)(k-1):\;k=1,\ldots,\frac{n+1}{2}\right\} \bigcup\left\{ kn+k(k+1):\;k=1,\ldots,\frac{n-1}{2}\right\}. \nonumber
\end{align}
Let us denote by $L_n$ (resp.\ $R_n$) the set on the left (resp.\ right) of the last union symbol. Clearly, $kn+k(k+1)$ is monotonically increasing with $k$ and it is easy to see that $(k-1)n-(k-2)(k-1)$ is monotonically increasing with $k$, for $k=1,2,\ldots,(n+1)/2$. We distinguish between two cases:
\begin{enumerate}
\item $a_n \in L_n$: In this case there are $k_L \in\{1,2,\ldots,(n+1)/2\}$ and $k_R\in \{1,2,\ldots,(n-1)/2\}$ such that 
\begin{align}
&k_L - 1 + k_R = \frac{n-1}{2}, \label{1}\\
&(k_L-1)n-(k_L-2)(k_L-1)\geq k_Rn+k_R(k_R+1),\label{2}\\
&(k_L-1)n-(k_L-2)(k_L-1)< (k_R+1)n+(k_R+1)(k_R+2).\label{3}
\end{align} Solving \eqref{1} for $k_R$ and substituting it into \eqref{2} and \eqref{3} we obtain 
\begin{align}
2k_{L}^{2}-(3n+5)k_{L}+\frac{3n^{2}+10n+11}{4}\leq 0,\nonumber\\
2k_{L}^{2}-(3n+7)k_{L}+\frac{3n^{2}+18n+23}{4}>0.\nonumber
\end{align} Solving these two inequalities we conclude that
\[k_L\in
\left(\frac{3n+5-\sqrt{3n^{2}+10n+3}}{4},\frac{3n+7-\sqrt{3}(n+1)}{4}\right].\] Since $a_n =(k_L-1)n-(k_L-2)(k_L-1)$, we conclude that
\begin{align}\frac{(n-1)\sqrt{3n^{2}+10n+3}}{8}\leq a_n \leq \frac{(n + 1)^2\sqrt{3}}{8}.\label{10}\end{align} 

\item $a_n \in R_n$: In this case there are $k_L \in\{1,2,\ldots,(n+1)/2\}$ and $k_R\in \{1,2,\ldots,(n-1)/2\}$ such that 
\begin{align}
&k_R - 1 + k_L = \frac{n-1}{2}, \label{100}\\
&k_Rn+k_R(k_R+1)\geq(k_L-1)n-(k_L-2)(k_L-1),\label{200}\\
&k_Rn+k_R(k_R+1)<k_Ln-(k_L-1)k_L.\label{300}
\end{align} Solving \eqref{100} for $k_L$ and substituting it into \eqref{200} and \eqref{300} we obtain 
\begin{align}
2k_{R}^{2}+(n+3)k_{R}-\frac{n^{2}+2n-3}{4}\geq 0,\nonumber\\
2k_{R}^{2}+(n+1)k_{R}-\frac{(n+1)^{2}}{4}<0.\nonumber
\end{align} Solving these two inequalities we conclude that
\[k_{R}\in\left[\frac{-n-3+\sqrt{3n^{2}+10n+3}}{4},\frac{(\sqrt{3}-1)(n+1)}{4}\right).\] Since $a_n =k_{R}n+k_{R}(k_{R}+1)$, we conclude that
\begin{align}\frac{(n-1)\sqrt{3n^{2}+10n+3}}{8}\leq a_n \leq \frac{(n + 1)^2\sqrt{3}}{8}.\label{800}\end{align} 
\end{enumerate}
Dividing \eqref{10} and \eqref{800} by $n^2$ and letting $n$ go to infinity, the assertion follows.
\end{proof}

\subsection{The integer lattice and its layers}

\noindent Let $k$ and $n$ be two nonnegative integers and let $m$ be a natural number. Denote by $a_{n,k}^{(m)}$ the number of integer points $(x_1,\ldots,x_m)$, such that $\max_{1\leq i\leq m} |x_i|\leq k$ and $\sum_{i=1}^m|x_i|\leq n$. Sequence \seqnum{A371835} is concerned with the array $\left(a_{n,k}^{(3)}\right)_{\substack{n\geq 0 \\0\leq k\leq n}}$.

Denote by $b_{n,k}^{(m)}$ the number of integer points $(x_1,\ldots,x_m)$, such that $\max_{1\leq i\leq m} |x_i|\leq k$ and $\sum_{i=1}^m|x_i|= n$. 
Fixing $k$ and $m$, we denote by $B_{k}^{(m)}(x)$ the generating function for the numbers $b_{n,k}^{(m)}$. In the following theorem we calculate $B_{k}^{(m)}(x)$.  We then use this result to resolve and extend a conjecture stated in \seqnum{A371835}, regarding a closed formula for $b_{n,k}^{(3)}$. 

This problem falls under the field of additive combinatorics (e.g., \cite[Section 2.5]{B}). 

\begin{theorem} 
We have
\begin{equation}\label{1gw}
B_{k}^{(m)}(x)=\left(1+2\sum_{i=1}^kx^i\right)^m.
\end{equation}
\end{theorem}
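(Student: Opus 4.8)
The plan is to interpret $B_k^{(m)}(x)$ combinatorially and recognize the product on the right-hand side of \eqref{1gw} as the generating function obtained by choosing each coordinate independently. Recall that $b_{n,k}^{(m)}$ counts integer points $(x_1,\ldots,x_m)$ with $|x_i|\le k$ for every $i$ and $\sum_{i=1}^m|x_i|=n$. The key observation is that the constraint $\max_i|x_i|\le k$ is a \emph{separable} constraint: it imposes $|x_i|\le k$ on each coordinate independently, and the quantity being summed, $\sum_i|x_i|$, is additive across coordinates. This is exactly the situation in which the generating function factorizes.

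First I would introduce, for a single coordinate, the generating function that records the contribution $x^{|t|}$ of an integer $t$ with $|t|\le k$. Since $t$ ranges over $\{-k,-k+1,\ldots,k-1,k\}$, and the value $t=0$ contributes $x^0=1$ while each nonzero value $t$ and its negative $-t$ each contribute $x^{|t|}$, the single-coordinate generating function is
\[
\sum_{t=-k}^{k} x^{|t|} = 1 + 2\sum_{i=1}^k x^i.
\]
Next I would formalize the factorization. By the product rule for generating functions (the $m$-fold Cauchy product), the coefficient of $x^n$ in $\left(1+2\sum_{i=1}^k x^i\right)^m$ equals
\[
\sum_{\substack{(x_1,\ldots,x_m)\\ |x_i|\le k}} [\, \textstyle\sum_{i=1}^m |x_i| = n\,],
\]
where the bracket is $1$ when the condition holds and $0$ otherwise; this is precisely $b_{n,k}^{(m)}$. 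Hence the two generating functions agree coefficient by coefficient, proving \eqref{1gw}.

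Concretely, I would either argue directly that multiplying out the $m$-fold product distributes into a sum over tuples $(t_1,\ldots,t_m)\in\{-k,\ldots,k\}^m$, each contributing $x^{|t_1|+\cdots+|t_m|}$, so that grouping by the total exponent $n$ yields $\sum_n b_{n,k}^{(m)} x^n$; or, slightly more formally, I would set up an induction on $m$, using the convolution identity $B_k^{(m)}(x) = B_k^{(m-1)}(x)\cdot B_k^{(1)}(x)$ together with the base case $B_k^{(1)}(x)=1+2\sum_{i=1}^k x^i$ established above. I do not expect any genuine obstacle here: the result is essentially a restatement of the definition of the generating function once the separability of both the constraint and the summand is noticed. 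The only point requiring a little care is the bookkeeping of the $t=0$ term versus the symmetric pair $\pm t$, which is exactly what produces the $1+2\sum_{i=1}^k x^i$ shape rather than a naive $\sum x^i$.
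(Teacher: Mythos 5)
Your proof is correct, and it takes a genuinely different --- and more direct --- route than the paper. You exploit separability: the single-coordinate generating function is $\sum_{t=-k}^{k} x^{|t|} = 1+2\sum_{i=1}^k x^i$, and since the constraint $|x_i|\le k$ acts on each coordinate independently while the statistic $\sum_i |x_i|$ is additive, the $m$-coordinate generating function is just the $m$-th power; the coefficient of $x^n$ in the product is exactly $b_{n,k}^{(m)}$ by the Cauchy product, with the sign bookkeeping ($2$ per nonzero coordinate) absorbed into the factor $2$ of each $x^i$, $i\geq 1$. The paper instead argues by induction, using the recursion $b_{n,k}^{(m+1)}=\sum_{i=0}^{m+1}2^{i}\binom{m+1}{i}b_{n-ik,k-1}^{(m+1-i)}$ obtained by conditioning on the number $i$ of coordinates whose absolute value equals $k$ exactly; this reduces $k$ to $k-1$, is translated into generating functions, and is recombined via the binomial theorem, with base cases $k=0$ and $m=1$. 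Both arguments are valid, but yours is shorter and makes the structural reason for the product form transparent, whereas the paper's recursion carries heavier bookkeeping (in effect it re-derives, through the binomial theorem, the factorization your argument gets for free); the paper's technique of conditioning on extremal coordinates would, however, remain usable in situations where the constraint couples the coordinates and destroys the separability your proof relies on.
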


\begin{proof}
First, notice that \[b_{n,0}^{(m)}=
\begin{cases}
1,& \textnormal{if } n=0;\\
0,& \textnormal{otherwise}.\\
\end{cases}\] Thus $B_0^{(m)} = 1$. Assuming that $k\geq 1$, we proceed by induction on $m\geq 1$. For $m=1$ we have \[b_{n,k}^{(1)} = 
\begin{cases}
1,&\text{if $n=0$};\\
2,&\text{if $1\leq n\leq k$};\\
0,&\text{otherwise}.
\end{cases}\] Thus, \eqref{1gw} holds in this case. Assume now that \eqref{1gw} holds for $m\geq 1$. We have the recursive relation \[b_{n,k}^{(m+1)}=\sum_{i=0}^{m+1}2^{i}\binom{m+1}{i}b_{n-ik,k-1}^{(m+1-i)}.\] Multiplying both sides of this equation by $x^n$ and summing over $n\geq 0$, we obtain 
\begin{align}
B_{k}^{(m+1)}(x)&=\sum_{i=0}^{m+1}(2x^k)^i\binom{m+1}{i}B_{k-1}^{(m+1-i)}(x)\nonumber\\
&=\sum_{i=0}^{m+1}(2x^k)^i\binom{m+1}{i}\left(1+2\sum_{j=1}^{k-1}x^j\right)^{m+1-i}.\label{2gw}
\end{align}
By the binomial theorem, the right-hand side of \eqref{2gw} is equal to
\[\left(1+2\sum_{j=1}^{k-1}x^{j}+2x^{k}\right)^{m+1}=
\left(1+2\sum_{j=1}^{k}x^j\right)^{m+1},\] completing the proof. 
\end{proof}

\begin{corollary}
We have
\begin{align}
&a_{n,k}^{(3)}=\nonumber\\
&\frac{1}{3}\begin{cases}
4n^{3}+6n^{2}+8n+3,&\textnormal{if $0\leq n< k$};\\
12k^{3}-36k^{2}n+36kn^{2}-8n^{3}+6n^{2}+6k+2n+3,&\textnormal{if $k\leq n< 2k$};\\
-84k^{3}+108k^{2}n-36kn^{2}+4n^{3}-72k^{2}+72nk-12n^{2}-6k+8n+3,&\textnormal{if $2k\leq n< 3k$};\\
24k^3 + 36k^2 + 18k + 3,&\textnormal{otherwise}.
\end{cases}\nonumber
\end{align}
\end{corollary}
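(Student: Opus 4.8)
The plan is to pass from the ``exactly $n$'' counts $b_{n,k}^{(3)}$, whose generating function is furnished by \eqref{1gw}, to the ``at most $n$'' counts $a_{n,k}^{(3)}$ by a single summation. Since a point with $\sum_i|x_i|\le n$ has $\sum_i|x_i|=j$ for a unique $j\le n$, we have $a_{n,k}^{(3)}=\sum_{j=0}^n b_{j,k}^{(3)}$, so that the ordinary generating function $A_k^{(3)}(x):=\sum_{n\ge0}a_{n,k}^{(3)}x^n$ satisfies $A_k^{(3)}(x)=B_k^{(3)}(x)/(1-x)$. Summing the geometric series inside \eqref{1gw} as
\[1+2\sum_{i=1}^k x^i=\frac{1+x-2x^{k+1}}{1-x},\]
I would then write this as the single rational function
\[A_k^{(3)}(x)=\frac{\left(1+x-2x^{k+1}\right)^3}{(1-x)^4}.\]

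Next I would expand the numerator by the binomial theorem, grouping $1+x$ and $-2x^{k+1}$, to obtain
\[\left(1+x-2x^{k+1}\right)^3=(1+x)^3-6(1+x)^2x^{k+1}+12(1+x)x^{2k+2}-8x^{3k+3}.\]
Multiplying by the standard expansion $(1-x)^{-4}=\sum_{n\ge0}\binom{n+3}{3}x^n$ and extracting the coefficient of $x^n$ then expresses $a_{n,k}^{(3)}$ as a fixed linear combination of shifted binomials $\binom{n-d+3}{3}$, the offsets $d$ falling into four blocks centered at $0$, $k+1$, $2k+2$, and $3k+3$.

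The organizing observation, which I would state explicitly, is that as a polynomial in $m$ the quantity $\binom{m+3}{3}=\tfrac16(m+1)(m+2)(m+3)$ vanishes at $m=-1,-2,-3$. Hence each of the four blocks, read as a cubic polynomial in $n$, already produces the correct coefficient from the moment its offset block begins, and the three thresholds $n=k$, $n=2k$, $n=3k$ partition $n\ge0$ into exactly the four ranges of the statement: for $0\le n<k$ only the $(1+x)^3$ block is active, for $k\le n<2k$ the $x^{k+1}$ block switches on, for $2k\le n<3k$ the $x^{2k+2}$ block as well, and for $n\ge3k$ all four blocks contribute. In this last range the combination collapses to the constant $(2k+1)^3=\tfrac13(24k^3+36k^2+18k+3)$, namely the total number of lattice points with $\max_i|x_i|\le k$, which serves as a reassuring sanity check and matches the final case.

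The only remaining task, and the main obstacle, is the polynomial bookkeeping: in each of the first three ranges I would substitute the explicit binomials, expand the cubics shifted by $k$ and $2k$, and collect terms to recover the three nonconstant cases. This is wholly routine but arithmetically delicate, since sign and coefficient errors in the shifted cubes are easy to make; verifying agreement of adjacent cases at the breakpoints $n=k,2k,3k$ (guaranteed by the vanishing property above) gives a convenient internal check.
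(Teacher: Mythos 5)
Your proposal is correct and follows essentially the same route as the paper: both pass from $B_k^{(3)}(x)$ to $A_k^{(3)}(x)=B_k^{(3)}(x)/(1-x)$, rewrite this as a single rational function with numerator $(1+x)^3-6(1+x)^2x^{k+1}+12(1+x)x^{2k+2}-8x^{3k+3}$ over $(1-x)^4$, and extract coefficients as shifted binomials $\binom{n-d+3}{3}$ in the four ranges. Your explicit observation that $\binom{m+3}{3}$ vanishes at $m=-1,-2,-3$, which justifies placing the thresholds at $n=k,2k,3k$ rather than at the offsets $k+1,2k+2,3k+3$, is a point the paper leaves implicit, and is a welcome clarification.
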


\begin{proof}
Fixing $k$ and $m$, we denote by $A_{k}^{(m)}(x)$ the generating function for the numbers $a_{n,k}^{(m)}$. Clearly, $A_{k}^{(m)}(x) = B_{k}^{(m)}(x)/(1-x)$. Thus
\begin{align}
A_{k}^{(3)}(x)&=\frac{\left(1+2\sum_{i=1}^{k}x^{i}\right)^{3}}{1-x}\nonumber\\&=\frac{1}{1-x}\sum_{j=0}^{3}\binom{3}{j}\left(2\sum_{i=1}^{k}x^{i}\right)^{j}\nonumber\\&=\sum_{j=0}^{3}\binom{3}{j}(2x)^{j}\frac{(1-x^{k})^{j}}{(1-x)^{j+1}} \nonumber\\&=\frac{-8x^{3k+3}+12x^{2k+3}+12x^{2k+2}-6x^{k+3}-12x^{k+2}+x^{3}-6x^{k+1}+3x^{2}+3x+1}{(1-x)^{4}}\nonumber\\
&=
\sum_{n\geq 0}\binom{n+3}{n}\Bigg(
-8x^{n+3k+3}+12x^{n+2k+3}+12x^{n+2k+2}-6x^{n+k+3}\nonumber\\&\hspace{120 pt}-12x^{n+k+2}+x^{n+3}-6x^{n+k+1}+3x^{n+2}+3x^{n+1}+x^n\Bigg).\nonumber
\end{align}
It follows that, for $0\leq n< k$, we have 
\[a_{n,k}^{(3)} =\binom{n}{3}+3\binom{n+1}{3}+3\binom{n+2}{3}+\binom{n+3}{3}=\frac{4n^{3}+6n^{2}+8n+3}{3}.\]
For $k\leq n< 2k$, we have 
\begin{align}
a_{n,k}^{(3)} &=-6\binom{n-k}{3}-12\binom{n-k+1}{3}+\binom{n}{3}-6\binom{n-k+2}{3}\nonumber\\&
\hspace{169pt}+3\binom{n+1}{3}+3\binom{n+2}{3}+\binom{n+3}{3}\nonumber\\&=\frac{12k^{3}-36k^{2}n+36kn^{2}-8n^{3}+6n^{2}+6k+2n+3}{3}\nonumber.
\end{align}
For $2k\leq n< 3k$, we have 
\begin{align}
a_{n,k}^{(3)} &=12\binom{n-2k}{3}+12\binom{n-2k+1}{3}-6\binom{n-k}{3}-12\binom{n-k+1}{3}\nonumber\\&\hspace{79pt}+\binom{n}{3}-6\binom{n-k+2}{3}+3\binom{n+1}{3}+3\binom{n+2}{3}+\binom{n+3}{3}\nonumber\\&=\frac{-84k^{3}+108k^{2}n-36kn^{2}+4n^{3}-72k^{2}+72nk-12n^{2}-6k+8n+3}{3}\nonumber.
\end{align}
Finally, for $n\geq 3k$, we have 
\begin{align}
a_{n,k}^{(3)} &=-8\binom{n-3k}{3}+12\binom{n-2k}{3}+12\binom{n-2k+1}{3}-6\binom{n-k}{3}\nonumber\\&-12\binom{n-k+1}{3}+\binom{n}{3}-6\binom{n-k+2}{3}+3\binom{n+1}{3}+3\binom{n+2}{3}+\binom{n+3}{3}\nonumber\\&=8k^3 + 12k^2 + 6k + 1\nonumber.\qedhere
\end{align}
\end{proof}

\subsection{Maximum value of a cyclic convolution}

In the next theorem we validate a conjecture stated in \seqnum{A294172} regarding a closed-form formula for the maximal value of the cyclic convolution of the numbers $1,2,\ldots,n$ with themselves.

\begin{theorem}
For an integer $n\geq 1$ define
\[
a_n =\max\left\{\sum_{i=1}^{n}(n-i+1)(1+(i+k\textnormal{ \ensuremath{(}mod }n)))\;:\;1\leq k\leq n\right\}.\]
Then \[a_n = \frac{1}{24}
\begin{cases}
7n^{3}+12n^{2}+8n,&\textnormal{if $n$ is even}; \\7n^{3}+12n^{2}+5n,   &\textnormal{otherwise}. 
\end{cases}\] 
\end{theorem}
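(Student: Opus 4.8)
The plan is to reduce the maximization over $k$ to the analysis of a single downward parabola. First I would peel off the part of the summand that does not depend on the shift. Expanding the product gives
\[
S(k) := \sum_{i=1}^{n}(n-i+1)(1+((i+k)\bmod n)) = \frac{n(n+1)}{2} + T(k), \qquad T(k) := \sum_{i=1}^{n}(n-i+1)\,((i+k)\bmod n),
\]
so that computing $a_n$ amounts to maximizing $T(k)$ over $1\le k\le n$, after which $a_n = \frac{n(n+1)}{2} + \max_k T(k)$.

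Next I would obtain $T(k)$ in closed form for $1\le k\le n-1$ by splitting the index range at the wrap-around point $i=n-k$, where the residue is $0$ and the corresponding term vanishes: for $i<n-k$ one has $(i+k)\bmod n = i+k$, while for $i>n-k$ one has $(i+k)\bmod n = i+k-n$. Each of the two resulting sums is a sum of a quadratic in $i$, evaluable via the standard formulas for $\sum i$ and $\sum i^2$. This is the one genuinely computational step, and I expect it to be the main obstacle; the decisive feature is that the cubic-in-$k$ contributions of the two pieces cancel, leaving the clean quadratic
\[
T(k) = -\frac{n}{2}k^{2} + \frac{n(n-2)}{2}k + \frac{n^{3}+3n^{2}-4n}{6}.
\]
I would treat the boundary shift $k=n$ separately and note that its value equals this quadratic evaluated at $k=0$, reflecting that shifting by $n$ is the same as shifting by $0$; it therefore suffices to maximize the quadratic over $k\in\{0,1,\ldots,n-1\}$.

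Finally, since $T$ is a downward parabola with vertex at $k^{\ast}=(n-2)/2$, I would locate the optimal admissible integer and read off $a_n$. When $n$ is even, $k^{\ast}$ is itself an admissible integer, so $\max_k T(k)$ equals the vertex value $\frac{n^{3}+3n^{2}-4n}{6} + \frac{n(n-2)^{2}}{8}$; adding $\frac{n(n+1)}{2}$ and collecting over the common denominator $24$ yields $a_n = (7n^{3}+12n^{2}+8n)/24$. When $n$ is odd, the two nearest integers $(n-1)/2$ and $(n-3)/2$ lie symmetrically at distance $1/2$ from the vertex and hence give equal maximal values, lying below the vertex value by exactly $\frac{n}{2}\cdot(1/2)^{2}=n/8$; subtracting this from the even-case expression gives $a_n = (7n^{3}+12n^{2}+8n)/24 - n/8 = (7n^{3}+12n^{2}+5n)/24$, exactly the claimed dichotomy.
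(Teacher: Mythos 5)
Your proof is correct and follows essentially the same route as the paper: split the sum at the wrap-around index to get a downward quadratic in $k$ (your $T(k)$ plus the constant $n(n+1)/2$ is exactly the paper's quadratic, with vertex at $(n-2)/2$), then evaluate at the admissible integer(s) nearest the vertex according to the parity of $n$. If anything you are more careful than the paper, which handles the boundary case $k=n$ implicitly and dismisses the parity analysis with ``the assertion follows easily.''
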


\begin{proof}
Let $1\leq k\leq n$. We have
\begin{align}
&\sum_{i=1}^{n}(n-i+1)(1+(i+k\text{ \ensuremath{(}mod }n)))\nonumber\\&=\sum_{i=1}^{n-1-k}(n-i+1)(1+i+k)+\sum_{i=n-k}^{n}(n-i+1)(1+i+k-n)\nonumber\\&=-\frac{n}{2}k^{2}+\left(\frac{n^{2}}{2}-n\right)k+\frac{n^{3}}{6}+n^{2}-\frac{n}{6}.\nonumber
\end{align}
The quadratic function \[f(x)=-\frac{n}{2}x^{2}+\left(\frac{n^{2}}{2}-n\right)x+\frac{n^{3}}{6}+n^{2}-\frac{n}{6}\] obtains its maximum at $x=n/2-1$. From this the assertion follows easily.
\end{proof}

\subsection{The real part of a recursively defined complex sequence}

Let $i=\sqrt{-1}$. The result of the next theorem  validates all the conjectures stated in \seqnum{A309878}, which is concerned with the real part of a recursively defined complex sequence.

\begin{theorem}
Let 
$(b_n)_{n\geq 0}$ be the sequence defined recursively as follows: $b_0=0$ and, for $n\geq 1$,
\begin{equation}\label{11z}
b_n = (n+b_{n-1})(1+i).
\end{equation} 
Let $(a_n)_{n\geq 0}$ be the sequence corresponding to the real part of $(b_n)_{n\geq 0}$, i.e., $a_n=\textnormal{Re}(b_n)$. Then
\[a_n = 2^{\frac{n}{2}+1}\sin\left(\frac{n\pi}{4}\right)-n.\]
\end{theorem}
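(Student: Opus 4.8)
The plan is to solve the recurrence \eqref{11z} in closed form and then extract its real part. Writing $\omega = 1+i$, the recurrence becomes the inhomogeneous first-order linear recurrence $b_n = \omega b_{n-1} + \omega n$ with constant coefficient $\omega$. I would look for a solution of the form $b_n = C\omega^n + \alpha n + \beta$, where $C\omega^n$ is the general solution of the homogeneous equation and $\alpha n + \beta$ is a particular solution suggested by the linear forcing term $\omega n$.

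First I would determine $\alpha$ and $\beta$ by substituting $\alpha n + \beta$ into the recurrence and matching the coefficient of $n$ against the constant term. This gives $\alpha(1-\omega) = \omega$ and $\beta(1-\omega) = -\omega\alpha$. Since $1-\omega = -i$, a short complex computation yields $\alpha = -1+i$ and $\beta = 2i$. Imposing the initial condition $b_0 = 0$ then forces $C = -\beta = -2i$, so that
\[b_n = -2i\,\omega^n + (-1+i)n + 2i.\]

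The last step is to take the real part. Writing $\omega = \sqrt{2}\,e^{i\pi/4}$, De Moivre's formula gives $\omega^n = 2^{n/2}\bigl(\cos(n\pi/4) + i\sin(n\pi/4)\bigr)$, whence $\mathrm{Re}(-2i\,\omega^n) = 2^{n/2+1}\sin(n\pi/4)$. Since $\mathrm{Re}\bigl((-1+i)n\bigr) = -n$ and $\mathrm{Re}(2i) = 0$, we obtain $a_n = 2^{n/2+1}\sin(n\pi/4) - n$, as claimed.

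The computation is entirely routine once the closed form is in hand; the only place to be careful is the complex arithmetic, especially the sign bookkeeping when multiplying $\omega^n$ by $-2i$ and selecting the real part. As an alternative that avoids solving the recurrence, one could prove the formula by induction on $n$, but this requires simultaneously tracking the imaginary part $\mathrm{Im}(b_n) = n + 2 - 2^{n/2+1}\cos(n\pi/4)$, since the map $b_{n-1}\mapsto b_n$ mixes the real and imaginary components; the explicit solution is cleaner and I would prefer it.
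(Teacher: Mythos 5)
Your proof is correct, and it takes a genuinely different (though closely related) route from the paper's. You solve the first-order linear recurrence directly by undetermined coefficients: writing $\omega=1+i$, you superpose the homogeneous solution $C\omega^n$ with a particular solution $\alpha n+\beta$, determine $\alpha=-1+i$ and $\beta=2i$ from the forcing term, fix $C=-2i$ from $b_0=0$, and obtain $b_n=-2i\,\omega^n+(-1+i)n+2i$, after which De Moivre's formula yields the real part. (Your complex arithmetic checks out; e.g., $\beta=-\alpha^2=2i$ and $\mathrm{Re}(-2i\,\omega^n)=2^{n/2+1}\sin(n\pi/4)$ are both right, and the closed form reproduces $b_1=1+i$, $b_2=2+4i$.) The paper instead multiplies the recurrence by $x^n$, sums to get the generating function $B(x)=\frac{(1+i)x}{(1-x)^2(1-(1+i)x)}$, expands it by partial fractions, and reads off coefficients, arriving at $b_n=(n+2)i-n-\frac{2^{n+2}}{(1-i)^{n+2}}$, which is exactly your closed form since $\frac{2}{1-i}=1+i$ and $(1+i)^2=2i$. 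The two arguments are equivalent in substance: your three-term ansatz mirrors the paper's three partial fractions $\frac{1}{(1-x)^2}$, $\frac{1}{1-x}$, $\frac{1}{1-(1+i)x}$ term by term. What yours buys is elementarity, as it avoids formal power series and the partial-fraction computation entirely; what the paper's buys is uniformity with the generating-function machinery used in its other sections. Your closing remark is also well taken: an induction on $a_n$ alone cannot close, since the recurrence mixes real and imaginary parts, so one must either track both (as you note) or work with $b_n$ itself, as both you and the paper do.
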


\begin{proof}
Denote by $B(x)$ the generating function for the sequence $(b_n)_{n\geq 0}$. Multiplying \eqref{11z} by $x^n$ and summing over $n\geq 1$, we obtain
\[
\sum_{n\geq 1}b_{n}x^{n}=\sum_{n\geq1}(n+b_{n-1})(1+i)x^{n}.
\]
Since $b_0=0$, we have
\[
\sum_{n\geq0}b_{n}x^{n}=(1+i)x\sum_{n\geq1}nx^{n-1}+(1+i)x\sum_{n\geq0}b_{n}x^{n}.\]
Thus, \[B(x)=\frac{(1+i)x}{(1-x)^2}+(1+i)xB(x).\] Solving for $B(x)$, we obtain
\begin{align}
B(x)&=\frac{(1+i)x}{(1-x)^{2}(1-(1+i)x)}\nonumber\\&=\frac{-1+i}{(1-x)^{2}}-\frac{2+2i}{1-(2x+i)}+\frac{1+i}{1-x}    \nonumber\\&=\frac{-1+i}{(1-x)^{2}}-\frac{2+2i}{1-i}\frac{1}{1-\frac{2x}{1-i}}+\frac{1+i}{1-x}\nonumber\\&=(-1+i)\sum_{n\geq0}nx^{n}+2i\sum_{n\geq0}x^{n}-\frac{2+2i}{1-i}\sum_{n\geq0}\frac{2^{n}}{(1-i)^{n}}x^{n}\nonumber\\&=\sum_{n\geq0}\left((n+2)i-n-\frac{2^{n+2}}{(1-i)^{n+2}}\right)x^{n}\nonumber\\
&=\sum_{n\geq0}\left(2^{\frac{n}{2}+1}\sin\left(\frac{n\pi}{4}\right)-n+\left((n+2)-2^{\frac{n}{2}+1}\sin\left(\frac{(n+2)\pi}{4}\right)\right)i\right)x^{n},\nonumber
\end{align}
from which the assertion immediately follows.
\end{proof}

\subsection{The generating functions of two sequences}

The results of the next theorem validate two conjectures stated in \seqnum{A294139} and \seqnum{A307684}.

\begin{theorem}
Let $(a_n)_{n\geq 1}$ and $(b_n)_{n\geq 1}$ be the sequences defined by
\begin{align}
a_n &=\sum_{k=1}^{\left\lfloor \frac{n-1}{2}\right\rfloor } \left(2k^2 +2(n-k)^2 +k(n-k)\right),\nonumber\\
b_n& =\sum_{k=1}^{\left\lfloor \frac{n}{3}\right\rfloor }\sum_{i=k}^{\left\lfloor \frac{n-k}{2}\right\rfloor} ik(n-i-k).\nonumber
\end{align} Let $A(x)$ and $B(x)$ be the generating functions of the two sequences, respectively. Then
\begin{align}
A(x)& =\frac{x^{3}(2x^{3}+11x^{2}+11x+12)}{(1-x)^{4}(1+x)^{3}},\nonumber\\
B(x)& = \frac{x^{3}(6x^{8}+14x^{7}+18x^{6}+21x^{5}+23x^{4}+15x^{3}+7x^{2}+3x+1)}{(1-x)^{6}(1+x)^{3}(x^{2}+x+1)^{4}}.\nonumber
\end{align}
\end{theorem}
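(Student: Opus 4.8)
The plan is to handle both generating functions by the same route: reduce each summand to a polynomial, carry out the (floored) summations to exhibit $a_n$ and $b_n$ as quasi-polynomials in $n$, read off the resulting denominator, and then recover or verify the numerator. The only genuinely delicate part is the double sum defining $b_n$; everything for $A(x)$ is routine.

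For $A(x)$ I would first expand the summand: a short computation gives $2k^2+2(n-k)^2+k(n-k)=3k^2-3nk+2n^2$, so that $a_n=\sum_{k=1}^{M}\bigl(3k^2-3nk+2n^2\bigr)$ with $M=\lfloor (n-1)/2\rfloor$. Applying the elementary formulas for $\sum_{k=1}^M k$ and $\sum_{k=1}^M k^2$ and separating the cases $n$ even and $n$ odd (the sole source of the floor, since $M=(n-1)/2$ for odd $n$ and $M=n/2-1$ for even $n$) yields a closed form of the shape $a_n=p(n)+(-1)^n q(n)$ with $\deg p\le 3$ and $\deg q\le 2$. This already shows that $A(x)$ is rational with denominator dividing $(1-x)^4(1+x)^3$, matching the claim. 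I would then assemble $A(x)$ by splitting the series into its even- and odd-indexed parts and summing each with the standard identities $\sum_{n\ge 0}\binom{n+j}{j}x^n=(1-x)^{-(j+1)}$; putting the two pieces over a common denominator and simplifying produces the stated numerator $x^3(2x^3+11x^2+11x+12)$.

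For $B(x)$ the same philosophy applies but the bookkeeping is heavier, so I would aim for a structural bound followed by finite verification. Writing $ik(n-i-k)=k\bigl((n-k)i-i^2\bigr)$, the inner sum over $i$ from $k$ to $\lfloor (n-k)/2\rfloor$ is a sum of a quadratic in $i$, evaluated by the same power-sum formulas; its value is a quasi-polynomial in $(n,k)$ whose form depends on the parity of $n-k$, of total degree $4$. Summing this over $k$ from $1$ to $\lfloor n/3\rfloor$ introduces both the alternation in the parity of $n-k$ and the cutoff at $\lfloor n/3\rfloor$, and by the elementary theory of summing polynomials over intervals with linear floor endpoints the result is a quasi-polynomial in $n$ of period dividing $6$ and degree at most $5$. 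Consequently the generating function $\sum_{n}b_n x^n$ has denominator dividing $(1-x^6)^6$. Since the proposed denominator $(1-x)^6(1+x)^3(x^2+x+1)^4$ also divides $(1-x^6)^6$ (using $1-x^6=(1-x)(1+x)(x^2+x+1)(x^2-x+1)$), both the true generating function and the claimed one are of the form $P(x)/(1-x^6)^6$ with $\deg P<36$. Two such series coincide as soon as their first $36$ coefficients agree, so it suffices to compute $b_0,\dots,b_{35}$ directly from the definition and check that the Taylor expansion of the proposed $B(x)$ reproduces them. (Alternatively, one can push the parity/residue case analysis all the way through to obtain $b_n$ explicitly on each class modulo $6$, which identifies the sharper denominator exactly and then needs only about a dozen terms to fix the numerator.)

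I expect the main obstacle to be precisely the evaluation of $b_n$: the doubly nested summation with two interacting floor functions, $\lfloor (n-k)/2\rfloor$ inside and $\lfloor n/3\rfloor$ outside, forces a six-way case analysis and a substantial amount of polynomial algebra, and it is here that sign and indexing errors are most likely. The finite-verification route described above is attractive exactly because it sidesteps the full closed form: one only needs the (soft) period and degree bounds together with a table of initial values, and the latter also guards against arithmetic slips while, given the established bounds, constituting a complete proof once enough terms match.
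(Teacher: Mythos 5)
Your proposal is correct, and for $A(x)$ it is essentially the paper's own argument: the paper also splits by parity (writing $n=2s$ or $n=2s+1$), evaluates the sum by Faulhaber's formula to get a cubic in $s$ for each parity class, and assembles $A(x)=A_1(x^2)+xA_2(x^2)$, which is exactly your even/odd interleaving. For $B(x)$, however, you take a genuinely different route. The paper pushes the explicit computation all the way through: it writes $n=6s+r$ with $0\le r\le 5$, evaluates the double sum via Faulhaber to obtain six explicit degree-$5$ polynomials in $s$, and then assembles $B(x)$ from the six corresponding generating functions via the substitution $x\mapsto x^6$ (your parenthetical ``alternative''). You instead prove only the soft structural facts --- $b_n$ is a quasi-polynomial of period dividing $6$ and degree at most $5$, hence both the true and the claimed generating functions can be written as $P(x)/(1-x^6)^6$ with $\deg P<36$ --- and then reduce the identity to agreement of the first $36$ coefficients, which is a finite, mechanical check. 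This buys a proof that sidesteps the error-prone six-way polynomial algebra, at the price of being a verification of a known answer rather than a derivation of it (which is fine here, since the formula is given), and of requiring the $36$ initial values to be computed exactly. One point you should make explicit for the argument to be airtight: the quasi-polynomial form must hold for \emph{all} $n\ge 0$, not merely eventually, or else the number of coefficients to be checked grows by the onset index. This does hold here, because for every $k$ in the outer range $1\le k\le\lfloor n/3\rfloor$ one has $n-k\ge 2k$ and hence $\lfloor (n-k)/2\rfloor\ge k$, so the inner sum is never empty and the Faulhaber evaluations are exact from $n=0$ onward (empty outer sums are also correctly reproduced, as the power-sum polynomials vanish at $0$); with that observation supplied, your argument is complete.
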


\begin{proof}
Write $n=2s$ if $n$ is even and $n=2s+1$ if $n$ is odd, for a nonnegative integer $s$. Using Faulhaber's formula (e.g., \cite[p.\ 106]{CG}), we have
\begin{align}
a_n &=\sum_{k=1}^{\left\lfloor \frac{n-1}{2}\right\rfloor } \left(2k^2 +2(n-k)^2 +k(n-k)\right)\nonumber\\
&=\frac{\left(\left\lfloor \frac{n-1}{2}\right\rfloor -1\right)\left\lfloor \frac{n-1}{2}\right\rfloor \left(2\left\lfloor \frac{n-1}{2}\right\rfloor -1\right)}{2}-\frac{3n\left(\left\lfloor \frac{n-1}{2}\right\rfloor -1\right)\left\lfloor \frac{n-1}{2}\right\rfloor }{2}+2n^{2}\left(\left\lfloor \frac{n-1}{2}\right\rfloor -1\right)\nonumber\\
&=\begin{cases}
6s^{3}-\frac{13}{2}s^{2}+\frac{1}{2}s, & \textnormal{if $n$ is even};\\
6s^{3}+5s^{2}+s, & \textnormal{otherwise}.
\end{cases}\nonumber
\end{align}
The generating functions for the sequences $\left(6s^{3}-\frac{13}{2}s^{2}+\frac{1}{2}s \right)_{s\geq0}$ and $\left(6s^{3}+5s^{2}+s \right)_{s\geq0}$  
are
\begin{align}
A_1(x):=&\frac{6x(x^{2}+4x+1)}{(1-x)^{4}}-\frac{13x(x+1)}{2(1-x)^{3}}+\frac{x}{2(1-x)^{2}},\nonumber\\
A_2(x):=&\frac{6x(x^{2}+4x+1)}{(1-x)^{4}}+\frac{5x(x+1)}{(1-x)^{3}}+\frac{x}{(1-x)^{2}},\nonumber
\end{align} respectively.
The assertion follows from calculating $A(x)=A_1(x^2)+xA_2(x^2)$.

The calculation of $B(x)$ is similar, but more involved. First, write $n=6s+r$, where $s\geq0$ and $0\leq r\leq 5$ are integers. Now,
\begin{align}
&b_n=\nonumber\\
&=\sum_{k=1}^{\left\lfloor \frac{n}{3}\right\rfloor }\sum_{i=k}^{\left\lfloor \frac{n-k}{2}\right\rfloor }ik(n-i-k)\nonumber\\
&=\sum_{k=1}^{\left\lfloor \frac{n}{3}\right\rfloor }\sum_{i=k}^{\left\lfloor \frac{n-k}{2}\right\rfloor }ikn-\sum_{k=1}^{\left\lfloor \frac{n}{3}\right\rfloor }\sum_{i=k}^{\left\lfloor \frac{n-k}{2}\right\rfloor }i^{2}k-\sum_{k=1}^{\left\lfloor \frac{n}{3}\right\rfloor }\sum_{i=k}^{\left\lfloor \frac{n-k}{2}\right\rfloor }ik^{2}  \nonumber\\&=\sum_{k=1}^{\left\lfloor \frac{n}{3}\right\rfloor }\left(\frac{k}{6}\left\lfloor \frac{n-k}{2}\right\rfloor \left(\left\lfloor \frac{n-k}{2}\right\rfloor +1\right)\left(3(n-k)-2\left\lfloor \frac{n-k}{2}\right\rfloor -1\right)+\frac{k^{2}(k-1)(5k-3n-1)}{6}\right)\nonumber\\
&=\begin{cases}
\frac{54}{5}s^{5}+\frac{27}{4}s^{4}+\frac{7}{6}s^{3}-\frac{3}{4}s^{2}+\frac{1}{30}s, & \textnormal{if } n=6s;\\
\frac{54}{5}s^{5}+\frac{63}{4}s^{4}+6s^{3}+\frac{5}{4}s^{2}+\frac{1}{5}s, & \textnormal{if } n=6s+1;\\
\frac{54}{5}s^{5}+\frac{99}{4}s^{4}+\frac{39}{2}s^{3}+\frac{25}{4}s^{2}+\frac{7}{10}s, & \textnormal{if } n=6s+2;\\
\frac{54}{5}s^{5}+\frac{135}{4}s^{4}+\frac{125}{3}s^{3}+\frac{103}{4}s^{2}+\frac{241}{30}s+1, & \textnormal{if } n=6s+3;\\
\frac{54}{5}s^{5}+\frac{171}{4}s^{4}+\frac{129}{2}s^{3}+\frac{185}{4}s^{2}+\frac{157}{10}s+2, & \textnormal{if } n=6s+4;\\
\frac{54}{5}s^{5}+\frac{207}{4}s^{4}+96s^{3}+\frac{349}{4}s^{2}+\frac{196}{5}s+7, & \textnormal{otherwise}.
\end{cases}\nonumber
\end{align}
Proceeding as in the calculation of $A(x)$, the asserted expression for $B(x)$ follows.
\end{proof}
\subsection{Words avoiding the patters \texorpdfstring{$z,z+1,z$}{} and \texorpdfstring{$z,z,z+1$}{}}
Let $k\geq 2$ be an integer and denote by $[k]$ the set $\{1,2,\ldots, k\}$. For a nonnegative integer $n$, a {\it word over $k$ of length $n$} is an element of $[k]^n$. A word $w_1\cdots w_n\in [k]^n$  {\it avoids the pattern $z,z+1,z$} if no $1\leq i\leq n-2$ and $z\in[k]$ exist such that $w_i = z, w_{i+1}=z+1$, and $w_{i+2}=z$. Avoidance of the other patter is defined similarly. The first part of the following theorem corresponds to \seqnum{A005251} by taking $k=2$, to \seqnum{A098182} by taking $k=3$ (giving the sequence a combinatorial interpretation), and to \seqnum{A206790} by taking $k=4$ (proving the conjectures stated there).
The second part corresponds to \seqnum{A000071} by taking $k=2$, to \seqnum{A206727} by taking $k=3$ (proving the conjectures stated there), and to \seqnum{A206570} by taking $k=4$ (proving the conjectures stated there).

\begin{theorem}
\begin{enumerate}
\item Denote by $f_{k}(n)$ the number of words over $k$ of length $n$ that avoid the pattern $z,z+1,z$. Then, for $n\geq 3$, the numbers $f_{k}(n)$ satisfy the recursion 
\[f_{k}(n)=kf_{k}(n-1)-f_{k}(n-2)+f_{k}(n-3),\] with initial values $f_{k}(0)=1,f_{k}(1)=k$, and $f_{k}(2)=k^2$. In particular, the corresponding generating function is given by \[\frac{1+x^{2}}{1-kx+x^{2}-x^{3}}.\]
\item Denote by $f_{k}(n)$ the number of words over $k$ of length $n$ that avoid the pattern $z,z,z+1$. Then, for $n\geq 2k-1$, the numbers $f_{k}(n)$ satisfy the recursion 
\[f_{k}(n)=\sum_{i=0}^{k-1}(-1)^i(k-i)f_{k}(n-2i-1).\]  Furthermore, the corresponding generating function is given by \[\frac{1}{1-\sum_{i=0}^{k-1}(-1)^i(k-i)x^{2i+1}}.\] 
\end{enumerate}    
\end{theorem}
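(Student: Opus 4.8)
The plan is to treat both parts by the same device: every avoiding word of length $n$ arises from a unique avoiding word of length $n-1$ by appending one of the $k$ letters, and such an append fails to be avoiding precisely when it completes a forbidden triple at the right end. Consequently one has $f_k(n)=kf_k(n-1)-(\text{boundary count})$, where the boundary count records those avoiders of length $n-1$ that admit a (necessarily unique) forbidden continuation, and the whole problem reduces to counting these boundary configurations. The orientation of the forbidden triple---it ends in an increase in part (1) and in a repeat-then-increase in part (2)---is what will make certain appends automatically safe.

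For part (1), appending $b$ to an avoider $w_1\cdots w_{n-1}$ creates the pattern $z,z+1,z$ only when $w_{n-2}=b$ and $w_{n-1}=b+1$; that is, the last two letters form an ascent by one and $b$ is forced to equal $w_{n-2}$. Hence $f_k(n)=kf_k(n-1)-g(n-1)$, where $g(m)$ counts avoiders of length $m$ ending in a pair $z,z+1$. I would then prove two one-line ``safe append'' facts: appending the top letter $k$ to any avoider is always legal (the triple never ends at $k$ from below), so the number $e(j)$ of avoiders of length $j$ ending in $k$ equals $f_k(j-1)$; and appending $z+1$ after a letter $z\le k-1$ is always legal, which identifies $g(m)$ with the avoiders of length $m-1$ ending in a letter $\le k-1$, giving $g(m)=f_k(m-1)-e(m-1)=f_k(m-1)-f_k(m-2)$. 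Substituting yields $f_k(n)=kf_k(n-1)-f_k(n-2)+f_k(n-3)$ for $n\ge 3$, and the generating function then follows by clearing denominators, the numerator $1+x^2$ being read off from the initial values $f_k(0)=1,f_k(1)=k,f_k(2)=k^2$.

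For part (2) the dangerous configuration is a repeated letter: appending $b$ to $w$ creates $z,z,z+1$ exactly when $w$ ends in a double $z,z$ with $z\le k-1$ and $b=z+1$. Writing $D(m)$ for the number of avoiders of length $m$ ending in such a double, one again gets $f_k(n)=kf_k(n-1)-D(n-1)$. I would refine by the repeated letter: let $D_z(m)$ count avoiders ending in $z,z$ and $c_z(j)$ count avoiders of length $j$ ending in the single letter $z$. Deleting the last letter shows $D_z(m)=c_z(m-1)$, while appending $z$ to an avoider is legal unless that avoider already ended in $z-1,z-1$, giving the ladder relation $c_z(j)=f_k(j-1)-c_{z-1}(j-2)$ with $c_0\equiv 0$. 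Unrolling the ladder produces the alternating sum $c_z(j)=\sum_{i=0}^{z-1}(-1)^i f_k(j-1-2i)$, and summing over $z=1,\dots,k-1$ (swapping the order of summation, so that $f_k(n-3-2i)$ acquires the weight $k-1-i$) turns $f_k(n)=kf_k(n-1)-D(n-1)$, after a shift of index, into the asserted recursion $f_k(n)=\sum_{i=0}^{k-1}(-1)^i(k-i)f_k(n-2i-1)$.

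I expect the only real work to be the bookkeeping in part (2): checking that the ladder relation and the deletion bijection remain valid down to the smallest lengths, under the convention $f_k(\cdot)=c_z(\cdot)=0$ at nonpositive arguments, so that the recursion actually holds for every $n\ge 1$ once the higher summands vanish by index positivity. This is exactly what forces the numerator of the generating function to be $1$, and it is also what pins the ``clean'' statement to $n\ge 2k-1$, where all $k$ summands are genuinely present. Given the all-$n$ recursion, the identity $F(x)\left(1-\sum_{i=0}^{k-1}(-1)^i(k-i)x^{2i+1}\right)=f_k(0)=1$ is immediate. The main pitfall to guard against is an off-by-one in any of the append or delete maps, so I would verify each ``safe append'' claim directly against the exact shape of the forbidden triple, since that orientation is precisely what makes the top letter and the letter $z+1$ safe to append.
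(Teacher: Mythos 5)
Your proposal is correct. For part (1) it is essentially the paper's own argument in lighter notation: the paper tracks the last two letters via $f_{k,u,v}(n)$ and uses exactly your two safe-append facts (appending $k$ is always legal; appending $u$ after $u-1$ is always legal) to show that the number of avoiders ending in an ascent-by-one equals $f_k(n-2)-f_k(n-3)$. For part (2), however, you take a genuinely different route. The paper does \emph{not} refine by the value of the repeated letter; instead it iterates a single chain of corrections $f_{k,k-1,k-1}(n-3)=f_k(n-5)-f_{k,k-2,k-2}(n-5)$, and so on down to $f_{k,1,1}(n-2k+1)=f_k(n-2k-1)$, arriving at the longer order-$(2k+1)$ recursion
\[
f_k(n)=kf_k(n-1)-f_k(n-2)+\sum_{i=1}^{k}(-1)^{i+1}f_k(n-2i-1),
\]
whose generating function has numerator $1+x^2$; it then factors the denominator as $\left(1+x^{2}\right)\left(1-\sum_{i=0}^{k-1}(-1)^{i}(k-i)x^{2i+1}\right)$ and cancels the spurious factor to obtain the stated generating function and recursion. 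Your ladder decomposition $c_z(j)=f_k(j-1)-c_{z-1}(j-2)$ (with $D_z(m)=c_z(m-1)$), unrolled and summed over $z$, produces the stated order-$(2k-1)$ recursion directly, with no detour through the auxiliary recursion, the general rational-function lemma, or the polynomial factorization; it also yields the slightly stronger fact that the recursion holds for all $n\geq 1$ under the convention $f_k(m)=0$ for $m<0$, which is what makes the numerator $1$ immediate. The trade-off is that the paper's chain requires almost no bookkeeping per step but ends with an algebraic identity to verify, whereas your approach front-loads the combinatorial bookkeeping (the swap of summation and the small-length conventions) and gets the clean answer at once; both are sound.
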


\begin{proof}
\begin{enumerate}
\item For $u,v\in [k]$, denote by $f_{k,u,v}(n)$ the number of words over $k$ which avoid the pattern $z,z+1,z$, whose last two letters are $u,v$. For $n\geq 3$ we have \[f_{k,u,v}(n)=\begin{cases}
\sum_{t=1}^{k}f_{k,t,u}(n-1), & \text{if \ensuremath{v\neq u-1;}}\\
\sum_{t=1}^{k}f_{k,t,u}(n-1)-f_{k,u-1,u}(n-1), & \text{otherwise}.
\end{cases}\] Now, 
\begin{align}
f_{k}(n)&=\sum_{u,v=1}^kf_{k,u,v}(n)\nonumber\\
&=\sum_{u,v=1}^k\sum_{t=1}^kf_{k,t,u}(n-1)-\sum_{u=2}^kf_{k,u-1,u}(n-1)\nonumber\\
&=kf_{k}(n-1)-\sum_{u=2}^kf_{k,u-1,u}(n-1). \label{qq1}
\end{align} Furthermore, 
\begin{align}
\sum_{u=2}^kf_{k,u-1,u}(n-1)&=\sum_{u=2}^k\sum_{t=1}^kf_{k,t,u-1}(n-2)\nonumber\\
&=\sum_{t,u=1}^kf_{k,t,u}(n-2) - \sum_{t=1}^kf_{k,t,k}(n-2)\nonumber\\
&=f_k(n-2)-f_k(n-3).\label{6qq}
\end{align} Substituting \eqref{6qq} in \eqref{qq1}, the assertion follows.
\item For $u,v\in [k]$, denote by $f_{k,u,v}(n)$ the number of words over $k$ which avoid the pattern $z,z,z+1$, whose last two letters are $u,v$. For $n\geq 3$ we have \[f_{k,u,v}(n)=\begin{cases}
\sum_{t=1}^{k}f_{k,t,u}(n-1), & \text{if \ensuremath{v\neq u+1;}}\\
\sum_{t=1}^{k}f_{k,t,u}(n-1)-f_{k,u,u}(n-1), & \text{otherwise}.
\end{cases}\] Now, 
\begin{align}
f_{k}(n)&=\sum_{u,v=1}^kf_{k,u,v}(n)\nonumber\\
&=\sum_{u,v=1}^k\sum_{t=1}^kf_{k,t,u}(n-1)-\sum_{u=1}^{k-1}f_{k,u,u}(n-1)\nonumber\\
&=kf_{k}(n-1)-\sum_{u=1}^{k-1}f_{k,u,u}(n-1). \label{qq10}
\end{align} Furthermore, 
\begin{align}
\sum_{u=1}^{k-1}f_{k,u,u}(n-1)&=\sum_{u=1}^{k-1}\sum_{t=1}^kf_{k,t,u}(n-2)\nonumber\\
&=\sum_{t,u=1}^kf_{k,t,u}(n-2) - \sum_{t=1}^kf_{k,t,k}(n-2)\nonumber\\
&=f_k(n-2)-f_k(n-3)+f_{k,k-1,k-1}(n-3).\label{6qq0}
\end{align} We have 
\[
f_{k,k-1,k-1}(n-3)=\sum_{t=1}^{k}f_{k,t,k-1}(n-4)
=f_{k}(n-5)-f_{k,k-2,k-2}(n-5).\] Proceeding in this manner, we arrive at
\[f_{k,1,1}(n-2k+1)=\sum_{t=1}^{k}f_{k,t,1}(n-2k)=f_{k}(n-2k-1).\] Substituting backwards, we see that \[f_{k}(n)=kf_{k}(n-1)-f_{k}(n-2)+\sum_{i=1}^k(-1)^{i+1}f_{k}(n-2i-1).\] Now, in general, if $a_1,\ldots,a_s$ are real numbers and $g(n)$ is a sequence satisfying the recursion \[g(n)=\sum_{t=1}^{s}a_{t}g(n-t),\] then the corresponding generating function $G(x)$ is given by \[G(x)=\frac{g(0)+\sum_{n=1}^{s-1}\left(g(n)-\sum_{t=1}^{n}a_{t}g(n-t)\right)x^{n}}{1-\sum_{t=1}^{s}a_{t}x^{t}}.\]
It is not hard to see that, in our case,  the nominator evaluates to $1+x^2$. It follows that the generating function for the numbers $f_k(n)$ is \[\frac{1+x^2}{1-kx+x^2-\sum_{i=1}^k(-1)^{i+1}x^{2i+1}}.\] Finally, the polynomial $1-kx+x^2-\sum_{i=1}^k(-1)^{i+1}x^{2i+1}$ factors as 
\[(1+x^2)\left(1-\sum_{i=0}^{k-1}(-1)^i(k-i)x^{2i+1}\right),\] concluding the proof.
\end{enumerate}
\end{proof}

In the following theorem we resolve some of the conjectures stated in \seqnum{A269467}. The sequence is concerned with the number $f_k(n)$ of words over $[k]$ of length $n$ with no repeated letter equal to the previous repeated letter.

\begin{theorem}
Let $F_k(x)$ denote the generating function of the sequence $(f_k(n))_{n\geq 1}$.
Then 
\begin{equation}\label{ttp1}
F_k(x) = -\frac{(k-2)x^{3}+2(k-2)x^{2}+(k-3)x-1}{2(k-1)^{2}x^{3}+(k-1)(k-4)x^{2}+(3-2k)x+1}.
\end{equation}
\end{theorem}

\begin{proof}
For $0\leq u\leq k$ and $1\leq v\leq k$ denote by $f_{k,u,v}(n)$ the number of words over $[k]$ of length $n$ whose last letter is $v$ and whose last repeated letter is $u$. Notice that $u=0$ encodes words having no repeated letters and therefore $\sum_{v=1}^kf_{k,0,v}(n)=k(k-1)^{n-1}$. For $n\geq 3$ we have \[f_{k,u,v}(n)=\begin{cases}
\sum_{1\leq t\leq k,t\neq v}f_{k,u,t}(n-1)+\sum_{0\leq t\leq k,t\neq v}f_{k,t,u}(n-1), & \text{if \ensuremath{v=u;}}\\
\sum_{1\leq t\leq k,t\neq v}f_{k,u,t}(n-1), & \text{if \ensuremath{v\neq u.}}
\end{cases}\] Then 
\begin{align}
f_{k}(n)& =\sum_{u=0}^{k}\sum_{v=1}^{k}f_{k,u,v}(n)    \nonumber\\
&=\sum_{u=0}^{k}\sum_{v=1}^{k}\sum_{1\leq t\leq k,t\neq v}f_{k,u,t}(n-1)+\sum_{v=1}^{k}\sum_{0\leq t\leq k,t\neq v}f_{k,t,v}(n-1)\nonumber\\
&=\sum_{v=1}^{k}\sum_{u=0}^{k}\left(\sum_{t=1}^{k}f_{k,u,t}(n-1)-f_{k,u,v}(n-1)\right)+\sum_{v=1}^{k}\left(\sum_{t=0}^{k}f_{k,t,v}(n-1)-f_{k,v,v}(n-1)\right)\nonumber\\
&=kf_{k}(n-1)-\sum_{v=1}^{k}f_{k,v,v}(n-1).\label{it2}
\end{align}
Now, 
\begin{align}
\sum_{v=1}^{k}f_{k,v,v}(n-1)&=\sum_{v=1}^{k}\left(\sum_{t=1}^{k}f_{k,v,t}(n-2)+\sum_{t=0}^{k}f_{k,t,v}(n-2)-2f_{k,v,v}(n-2)\right)   \nonumber\\
&=2f_{k}(n-2)-\sum_{t=1}^{k}f_{k,0,t}(n-2)-2\sum_{v=1}^{k}f_{k,v,v}(n-2)\nonumber\\
&=2f_{k}(n-2)-k(k-1)^{n-3}-2\sum_{v=1}^{k}f_{k,v,v}(n-2).\label{it1}
\end{align}
Iterating \eqref{it1} until we reach $\sum_{v=1}^{k}f_{k,v,v}(2)$, which is equal to $k$, and substituting the result into \eqref{it2}, we obtain the equation \[f_{k}(n)=kf_{k}(n-1)+\sum_{i=1}^{n-3}\left((-2)^{i}f_{k}(n-1-i)+(-2)^{i-1}k(k-1)^{n-2-i}\right)-(-2)^{n-3}k.\] Multiplying both sides of the equation by $x^n$ and summing over $n\geq 4$, we may (eventually) solve for $F_k(x)$ and obtain \eqref{ttp1}.
\end{proof}

\begin{corollary}
The denominator of $F_k(x)$ confirms the conjectured recurrences stated in \seqnum{A269467}. 
\end{corollary}

\subsection{A greedily defined integer sequence}

In the following theorem we show how sequence \seqnum{A128135} emerges as a subsequence of a sequence defined by a greedy integer recurrence. Such greedily defined integer sequences have been studied by Venkatachala \cite{V}, Avdispahi\'{c} and Zejnulahi \cite{AZ}, and Shallit \cite{S}. While the result we obtain does not directly settle a conjecture from the OEIS, we noticed it while we were working on a conjecture stated in \seqnum{A248982}. We shall elaborate on it at the end of this section.

\begin{theorem}
Let $a_1=1$ and, for $n\geq 2$, let $a_n$ be the least positive integer such that the average of $a_1,\ldots,a_{n-1}$ is a power of $2$. Then
\[a_n = 
\begin{cases}
(n+1)2^{\frac{n}{2}-1},&\textnormal{if } n \textnormal{ is even};\\   
2^{\frac{n-1}{2}}, & \textnormal{otherwise}.    \end{cases}
\]
\end{theorem}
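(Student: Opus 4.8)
The plan is to track the partial sums $S_n = \sum_{i=1}^n a_i$ and to prove, by strong induction on $n$, the combined statement that (a) $a_n$ has the asserted value, (b) $S_n = n\cdot 2^{\lfloor n/2\rfloor}$, and (c) the terms $a_1,\dots,a_n$ are pairwise distinct, the odd-indexed ones being exactly the powers $2^0,2^1,2^2,\dots$ and the even-indexed ones being exactly the numbers $(2j+1)2^{j-1}$, i.e.\ $3,10,28,\dots$. The point of carrying (b) and (c) alongside (a) is that the greedy rule only becomes checkable once we know $S_{n-1}$ and the set of already-used (hence forbidden) values, so the three statements must be proved simultaneously. I read the defining rule in the standard ``lexicographically least sequence of distinct positive integers'' sense: $a_n$ is the smallest positive integer not among $a_1,\dots,a_{n-1}$ for which the average $S_n/n$ is a power of $2$; equivalently, $a_n$ is the smallest admissible positive integer with $S_{n-1}+a_n = n\cdot 2^k$ for some integer $k\ge 0$.

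The engine of the induction is the observation that, for fixed $n$, the positive integers $a$ for which $S_{n-1}+a$ equals $n$ times a power of two form the strictly increasing list $n\cdot 2^k - S_{n-1}$, as $k$ ranges over the integers making this positive. Thus determining $a_n$ reduces to computing the first one or two entries of this list and testing them against the set of used values, and I would split into two cases by the parity of $n$. For $n=2m+1$ odd, the hypothesis gives $S_{n-1}=S_{2m}=2m\cdot 2^m$, the least positive candidate occurs at $k=m$ and equals $2^m$; since $2^m$ is a fresh power of two (the used powers are only $2^0,\dots,2^{m-1}$, and no even-indexed term is a power of two), it is admissible, so $a_n=2^m=2^{(n-1)/2}$. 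For $n=2m$ even, the hypothesis gives $S_{n-1}=S_{2m-1}=(2m-1)2^{m-1}$; now the least positive candidate occurs at $k=m-1$ and equals $2^{m-1}$, but this value has already appeared as $a_{2m-1}$ and is therefore forbidden, so the rule advances to the next candidate, at $k=m$, namely $(2m+1)2^{m-1}=(n+1)2^{n/2-1}$, which is new because it is not a power of two and exceeds every earlier even-indexed term.

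To close the induction I would update (b) and (c): in the odd case $S_n = S_{n-1}+2^m=(2m+1)2^m$, and in the even case $S_n=(2m-1)2^{m-1}+(2m+1)2^{m-1}=2m\cdot 2^m$, both instances of $n\cdot 2^{\lfloor n/2\rfloor}$; distinctness is preserved since the families $2^j$ and $(2j+1)2^{j-1}$ are each strictly increasing and are mutually disjoint, as $2j+1\ge 3$ is odd and so $(2j+1)2^{j-1}$ is never a power of two. The base cases $n=1$ (given) and $n=2$ (the even case with $m=1$, where the first candidate $2^0=1=a_1$ is forbidden and we take $3$) anchor everything. I expect the main obstacle to be precisely the even-index bookkeeping: one must show, uniformly in $m$, that the smallest average-making candidate is always the already-used power $2^{m-1}$ while the second smallest is always genuinely new. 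Everything else is routine arithmetic with powers of two; the conceptual content is exactly this ``skip one candidate'' phenomenon, which is what converts a naive power-of-two average into the interleaving of $2^{(n-1)/2}$ and $(n+1)2^{n/2-1}$.
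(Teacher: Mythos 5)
Your proposal is correct and follows essentially the same route as the paper: induction on the partial sums $s_n = n\,2^{\lfloor n/2\rfloor}$, with the candidates for $a_n$ parametrized by the exponent of the power of two, and the even-index step skipping the first positive candidate because it equals $a_{n-1}$ and so violates distinctness. The only difference is one of explicitness: you carry along the inventory of used values (the powers $2^j$ at odd indices, the values $(2j+1)2^{j-1}$ at even indices) to justify that the chosen candidate is genuinely new, a point the paper dispatches with ``which is obviously adequate.''
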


\begin{proof}
Set $s_n=\sum_{i=1}^n a_i$. We claim that $s_n=n2^{\left\lfloor\frac{n}{2}\right\rfloor}$ (\seqnum{A132344}). To see that, We proceed by induction on $n$. The base case, namely $n=1$, obviously holds. Now, assume that the assertion holds for every $1\leq i\leq n-1$, where $n\geq 2$. Let $\ell$ be a nonnegative integer such that $s_n=n2^\ell$. We have 
\begin{equation}\label{1k}
a_n = s_n - s_{n-1} = n2^\ell- (n-1)2^{\left\lfloor\frac{n-1}{2}\right\rfloor}.
\end{equation} Clearly, if $\ell\geq \lfloor(n-1)/2\rfloor$, then $a_n > 0$. We claim that the converse also holds. Indeed, suppose that $a_n >0$ but $\ell<\lfloor(n-1)/2\rfloor$. Then \[n2^{\left\lfloor \frac{n-1}{2}\right\rfloor -1}-(n-1)2^{\left\lfloor \frac{n-1}{2}\right\rfloor }>0\iff\frac{n}{n-1}>2.\] But since $n\geq 2$, we have $n/(n-1) \leq 2$. 

Now, assume that $n$ is even. If $\ell=\lfloor(n-1)/2\rfloor$, then, by \eqref{1k} and the induction hypothesis, $a_n = 2^{\left\lfloor\frac{n-1}{2}\right\rfloor} = 2^{\frac{n-2}{2}}= a_{n-1}$, in violation of the distinctness condition. Trying the next best candidate $\ell=\lfloor(n-1)/2\rfloor+1$, we have, by \eqref{1}, \[a_n = n2^{\left\lfloor\frac{n-1}{2}\right\rfloor+1}- (n-1)2^{\left\lfloor\frac{n-1}{2}\right\rfloor}=(n+1)2^{\left\lfloor\frac{n-1}{2}\right\rfloor},\] which is obviously adequate. Thus, $s_n = n2^{\left\lfloor\frac{n}{2}\right\rfloor}$.

Consider $a_{n+1}$ now and let $\ell$ be a nonnegative integer such that $s_{n+1}=(n+1)2^\ell$. We have
\[a_{n+1} = s_{n+1} - s_n = (n+1)2^\ell - n2^{\left\lfloor\frac{n}{2}\right\rfloor}.\] Here, $\ell=\lfloor n/2 \rfloor$ is possible, leading to $a_{n+1}=2^{\left\lfloor\frac{n}{2}\right\rfloor}$ and $s_{n+1} = (n+1)2^{\left\lfloor\frac{n}{2}\right\rfloor}$, concluding the proof of the induction step.
\end{proof}

As mentioned earlier, we noticed the statement of the previous theorem while we were working on a conjecture stated in \seqnum{A248982}, which is defined to be the sequence of distinct least positive numbers such that the average of the first $n$ terms is a Fibonacci number. Let $(a_n)_{n\geq1}$ be this sequence. Refining the conjecture stated in \seqnum{A248982} regarding a closed-form formula for $(a_n)_{n\geq1}$, it seems that, for $n\geq 10$, we have
\[a_n = 
\begin{cases}
nF\left(\frac{n}{2}+3\right)-(n-1)F\left(\frac{n}{2}+2\right),&\textnormal{if $n$ is even};\\    
F\left(\frac{n+1}{2}+2\right), & \textnormal{otherwise}.\\    
\end{cases}
\]
The proof of this should go along the same lines as the proof of the previous theorem. Nevertheless, we were not able to show that the intersection of the sets 
\begin{align}
&\left\{nF\left(\frac{n}{2}+3\right)-(n-1)F\left(\frac{n}{2}+2\right)\;:\;\text{$n\geq 1$ is even} \right\},\nonumber\\
&\left\{F\left(\frac{n+1}{2}+2\right)\;:\;\text{$n\geq 1$ is odd} \right\},\nonumber
\end{align}
is empty.   

\begin{thebibliography}{99}

\bibitem{AZ}
M.~Avdispahi\'{c} and F.~Zejnulahi, An integer sequence with a divisibility property, {\it Fibonacci Quart.} {\bf 58} (2020), 321--333.

\bibitem{B}
B.~Bajnok, {\it Additive Combinatorics: A Menu of Research Problems}, Chapman and Hall/CRC, 2018. 

\bibitem{BW}
R.~P.~Boas and J.~W.~Wrench, Partial sums of the harmonic series, {\it Amer. Math. Monthly} {\bf 78} (1971), 864--870.

\bibitem{CG}
J.~H.~Conway and R.~Guy, {\it The Book of Numbers}, Springer Science \& Business Media, 1998.

\bibitem{K}
K.~Knopp, {\it Theory and Application of Infinite Series}, Courier Corporation, 1990.

\bibitem{H}
D.~C.~Hoaglin, F.~Mosteller, and J.~W.~Tukey, {\it Understanding Robust and Exploratory Data Analysis}, John Wiley \& Sons, 2000.

\bibitem{S}
J.~Shallit, Proving properties of some greedily-defined integer recurrences via automata theory, {\it Theoretical Computer Science} {\bf 988} (2024).

\bibitem{SL}
N.~J.~A.~Sloane, The On-Line   Encyclopedia of Integer   Sequences, OEIS  Foundation  Inc., \url{https://oeis.org}.

\bibitem{V}
B.~J.~Venkatachala, A curious bijection on natural numbers, {\it J. Integer Sequences} {\bf 12}
(2009), Article 09.8.1.

\bibitem{W}
D.~V.~Widder, {\it Advanced Calculus}, Prentice-Hall, 1947.

\end{thebibliography}
\end{document}